\def\ocirc#1{\ifmmode\setbox0=\hbox{$#1$}\dimen0=\ht0
    \advance\dimen0 by1pt\rlap{\hbox to\wd0{\hss\raise\dimen0
    \hbox{\hskip.2em$\scriptscriptstyle\circ$}\hss}}#1\else
    {\accent"17 #1}\fi}
 \newtheorem{question}{Question}
 \newtheorem{thm}{Theorem}
\newtheorem{lem}[thm]{Lemma}
\newtheorem*{tmrt}{Topological Multiple Recurrence Theorem}
\DeclareMathOperator{\Sp}{Sp}
 \newcommand{\ra}{\to}
 \newcommand{\N}{\mathbb{N}}
 \newcommand{\set}[1]{\left\{#1\right\}}
 \newcommand{\lang}{\mathcal{L}}
\newcommand{\fm}{f^{(\times m)}}
\newcommand{\fsm}{f^{(\ast m)}}
\newcommand{\Xm}{X^{m}}
\begin{document}

%\pagestyle{headings}
%\mainmatter

\title{On weak mixing, minimality and weak disjointness of all iterates}

\author[Kwietniak]{Dominik Kwietniak}
\address[D.~Kwietniak]{Institute of Mathematics, Jagiellonian University,
{\L}ojasiewicza~6, 30-348 Krak\'ow, Poland}
\email{dominik.kwietniak@uj.edu.pl}

\author{Piotr Oprocha}
\address[P.~Oprocha]{AGH University of Science and Technology\\
Faculty of Applied Mathematics\\
al. A. Mickiewicza 30, 30-059 Krak\'ow,
Poland\\ -- and --\\Institute of Mathematics\\ Polish Academy of Sciences\\ ul. \'Sniadeckich 8, 00-956 Warszawa, Poland} \email{oprocha@agh.edu.pl}

\begin{abstract}
The article addresses some open questions about the relations between the topological weak mixing property and
the transitivity of the map $f\times f^2 \times\ldots \times f^m$, where $f\colon X\ra X$ is a topological dynamical system on a compact metric space. The theorem stating that a weakly mixing and strongly transitive system is $\Delta$-transitive is extended to a non-invertible case with a simple proof. Two examples are constructed, answering the questions posed by Moothathu  [Colloq. Math. 120 (2010), no. 1, 127--138]. The first one is a multi-transitive non weakly mixing system, and the second one is a weakly mixing non multi-transitive system.
The examples are special spacing shifts. The later shows that the assumption of minimality in the Multiple Recurrence Theorem can not be replaced by weak mixing.
%Given a continuous map $f$, we study the recurrence properties of $\fsm=f\times f^2 \times\ldots \times f^m$.
%In contrast to minimal case, we show that in general there is no relation between topological weak mixing and %transitivity of $\fsm$. In minimal case, we survey some old results related to the topic and show that some of %them work also in noninvertible case. That way we answer some open question existing in the literature.
\end{abstract}

\keywords{topological transitivity, weak mixing, minimality, multi-transitivity, $\Delta$-transitivity, spacing shifts}

\subjclass[2010]{Primary 37B05; Secondary 37B20, 54H20, 37B10}

% 37B05 Transformations and group actions with special properties (minimality, distality, proximality, etc.)
% 54H20 topological dynamics
% 37B20 notions of recurrence
% 37B10  	Symbolic dynamics
\maketitle

%\tableofcontents

\section{Introduction}

The systematic study of transitivity and recurrence in dynamics dates back (as it is often the case in this subject) to Poincar\'{e}. In 1967 Furstenberg \cite{FurstenbergThm} published his seminal paper, which in recent years became the basis for a broad classification of dynamical systems by their recurrence properties. %These ideas flourished later in \cite{FurstenbergBook}.
For an account of these results and their connections with combinatorics, harmonic analysis and number theory we refer the reader to Glasner survey article \cite{GlasnerRec}.
%introduced the notion to topological dynamics and ergodic theory

Our purpose here is to study recurrence properties of $f\times f^2\times \ldots\times f^m$. We clarify dependencies between some variants of transitivity by solving open problems posed by Moothathu \cite{TKSM}. Our interest in recurrence properties  of $f\times f^2\times \ldots\times f^m$ is motivated by the following version of the celebrated topological multiple recurrence theorem.  From it one can deduce the famous van der Waerden Theorem on the existence of arbitrarily long arithmetical progressions in some element of a partition of the integers (see \cite{GlasnerBook} pp. 46--47).

\begin{tmrt}{ \cite[Thm. 1.56]{GlasnerBook}}
Let $f$ be a minimal homeomorphism of a compact metric space $X$. If $U$ is a non-empty open subset of $X$, then for every positive integer $n$ there exists a positive integer $k$ with:
\[
U\cap f^k(U)\cap f^{2k}(U)\cap\ldots\cap f^{(n-1)k}(U)\neq\emptyset.
\]
\end{tmrt}

It follows that, if $f$ is a minimal homeomorphism, then for every $m\ge 1$ the map $f\times f^2\times \ldots f^m$ has a residual set of recurrent points. The last observation raises some natural questions: What other recurrence properties does $f\times f^2\times \ldots f^m$ have? Can it be minimal?  Must it be at least topologically transitive? Can we replace the assumption of minimality of $f$ by some other recurrence assumption like weak mixing?
We discuss some of these problems in Section \ref{sec:5}.
Here note that it is an immediate consequence of the above theorem that for every $n$ the set $N(U,U;f)=\set{m>0\; : \; f^m(U)\cap U\neq \emptyset}$ contains an arithmetic progression $k,2k,\ldots, k(n-1)$. Moreover, the same must hold if $f$ is continuous and topologically mixing. Then one can wonder if weak mixing is also enough. Since weak mixing implies that $N(U,U;f)$ contains arbitrary long intervals of consecutive integers, it is easy to see that in a weak mixing system for any non-empty open subset of $U\subset X$ and every positive integer $n$ there exist positive integers $k,m$ with:
\[
m+k, m+2k, \ldots, m+(n-1)k \in N(U,U;f).
\]
Now the question is: can we demand that $m=0$?
Our Theorem~\ref{thm:WMnotMT} shows that the answer must be in the negative.

Another formulation and motivation comes from the notion of \emph{disjointness} introduced to the topological dynamics, as well as to the ergodic theory by Furstenberg in  \cite{FurstenbergThm} and its \emph{weak} form developed in \cite{Auslander,GlasnerBook,HuangYe1,HuangYe2}. Let us recall, that $f$ and $g$ are \emph{weakly disjoint} if their Cartesian product $f\times g$ is topologically transitive. Weakly disjoint systems are kind of \emph{independent} one from another. It is independence in a rather weak sense as it may happen that $f$ is weakly disjoint from itself, that is, $f$ is \emph{weakly mixing}.
It is well known that $f$ is weakly mixing if and only if for any $n\ge 2$ the Cartesian product of $n$ copies of $f$, that is, $f\times\ldots\times f$ is topologically transitive. It follows that if $f$ is weakly mixing, then $f^n$ is topologically transitive for any $n\ge 1$.

Now, it is natural to ask: \emph{Can $f$ be weakly disjoint from some of its iterates, $f^m$, where $m\ge 2$?}
and \emph{How is weak disjointness of $f$ and $f^m$ related to weak mixing?}
These questions %is connected with the analysis of recurrence properties of $f\times f^2 \times\ldots \times f^m$, for $m\ge 2$, and the latter question
can be thought of as a topological dynamics counterpart of problems considered in ergodic theory (see \cite{Glasner}). Here we follow \cite{TKSM}, and we consider two properties, very similar to the weak mixing, namely:
\begin{description}
\item[$\mathbf{(\star)}$] for each $m\in \mathbb{N}$ the map $f\times f^2 \times\ldots \times f^m$ is topologically transitive.
\item[$\mathbf{(\star\star)}$] for each $m\in \mathbb{N}$ there is a residual set $Y\subset X$ such that for every point $x\in Y$ the tuple $(x,\ldots,x)\in \Xm$
has a dense orbit in $\Xm$ under the map $f\times f^2 \times\ldots \times f^m$.
\end{description}
Following \cite{TKSM}, we will say that $f$ is \emph{multi-transitive} if it satisfies $\mathbf{(\star)}$ and that $f$ is \emph{$\Delta$-transitive} if $\mathbf{(\star\star)}$ holds.

It is known that both properties presented above are equivalent to weak mixing if $f$ is a minimal homeomorphism. The proof of that equivalence using only elementary notions of topological dynamics is contained in \cite{TKSM}. The implication stating that weak mixing implies $\Delta$-transitivity
was earlier proved by Glasner (see \cite{Glasner}) with the help of the general structure theorem for minimal homeomorphisms. In \cite{TKSM} the question whether this implication holds for non necessarily invertible continuous maps was left open. Here we answer it affirmatively providing a simple proof for the general case, see Theorem~\ref{MT:thm4:general} below.

Moreover, we solve another open problem stated in \cite{TKSM}. We show that in general there is no connection between weak mixing and multi-transitivity by constructing examples of weakly mixing but non multi-transitive (Theorem \ref{thm:WMnotMT}) and multi-transitive but non weakly mixing (Theorem \ref{ex:MTnotWM}) systems. Finally, in Section \ref{sec:5} we offer some remarks regarding the last question of \cite{TKSM} in which Moothathu asked if there is a nontrivial minimal system $f\colon X\ra X$ such that $f\times f^2 \times\ldots \times f^m\colon \Xm\ra \Xm$ is minimal for some $m\ge 2$.

\section{Preliminaries}

Let $X$ be a compact metric space and $f\colon X\ra X$ be a continuous map. For every $m\geq 1$ denote the Cartesian product of $m$ copies of $X$ with itself by $\Xm$ and define two maps of $\Xm$ to itself: $\fm=f\times\ldots \times f$ and $\fsm=f\times f^2\times \ldots \times f^m$.

Given any sets $U,V\subset X$ we denote $N(U,V;f)=\set{n>0\; : \; f^{n}(U)\cap V \neq \emptyset}$.
If the map $f$ is clear from the context we simply write $N(U,V)$.

A map $f$ is \emph{minimal}, if it has no proper closed invariant set, that is, if $K\subset X$ {\color{green}is} nonempty, closed and $f(K)\subset K$ then $K=X$.
We say that $f$ is \emph{(topologically) transitive} if $N(U,V)\neq \emptyset$ for any pair of nonempty open sets $U,V\subset X$.
A set $S\subset\mathbb{Z}_+$ is \emph{syndetic} if there is a constant $L>0$ such that for every $n\ge$ we have $[n,n+L]\cap S\neq\emptyset$.
Then we say that a map $f$ is \emph{syndetically transitive} if $N(U,V)$ is syndetic for any nonempty open sets $U,V\subset X$. If $f\times f$ is transitive, then we say that $f$ is \emph{weakly mixing}. If for any nonempty open set $U\subset X$ there is $M>0$ such that $\bigcup_{j=1}^M f^j(U)=X$ then $f$ is said to be \emph{strongly transitive}. It immediately follows from the definition that any strongly transitive map is syndetically transitive.

%We say that $f$ is \emph{multi-transitive} if for each $m=1,2,\dots$ the map
%$\fsm$ is transitive and $f$ is \emph{$\Delta$-transitive} if for every $m\geq 1$ there is a residual set $Y\subset X$
%such that for every point $x\in Y$ the tuple $(x,\ldots,x)\in \Xm$
%has a dense orbit in $\Xm$ under the map $f\times f^2 \times\ldots \times f^m$.

Let $f$ and $g$ be two continuous surjective maps acting on compact metric spaces $X$ and $Y$, respectively.
We say that a nonempty closed set $J\subset X\times Y$  is \emph{a joining} of $f$ and $g$ if it is invariant for the product map $f\times g$ and its projections on first and second coordinate are $X$ and $Y$ respectively.
If $X\times Y$ is the only joining of $f$ and $g$ then we say that $f$ and $g$ are \emph{disjoint}.

The notion of disjointness was first introduced by Furstenberg in \cite{FurstenbergThm}.
It is well known that if $f$ and $g$ are disjoint then at least one of them is minimal. It is also not so hard to verify that if $f,g$ are both minimal, then they are disjoint if and only if $f\times g$ is minimal.

\section{Strong transitivity and $\Delta$-transitivity}

The main result of this section (Theorem~\ref{thm:main}) is obtained as a corollary from
Theorem~\ref{MT:thm4:general} below.
The  Theorem~\ref{MT:thm4:general} was proved by \cite[Theorem~4]{TKSM}
with the additional assumption that $f$ is a homeomorphism.
Here we present it with a new proof, which works for any continuous map.

We recall two results from \cite{TKSM}, modifying first to a suitable form.

\begin{thm}[{\cite[Proposition 1]{TKSM}}]\label{MT:prop1}
Let $X$ be a compact metric space. A continuous map $f\colon X\ra X$ is $\Delta$-transitive if and only if
for each $m\ge 1$ and nonempty open sets $U,V_1,\ldots,V_m\subset X$, there exists $n\ge 1$ such that
\[
U\cap \bigcap_{i=1}^m f^{-in}(V_i)\neq\emptyset.
\]
\end{thm}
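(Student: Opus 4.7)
The plan is to translate $\Delta$-transitivity into a residual-$G_\delta$ statement, so the equivalence becomes a direct application of the Baire category theorem.

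Fix $m\ge 1$ and a countable basis $\{B_k\}_{k\in\N}$ for the topology of $\Xm$ consisting of open boxes $B_k=V_1^{(k)}\times\cdots\times V_m^{(k)}$. For each $k$ set
\[
W_k \;=\; \bigcup_{n\ge 1}\bigl(f^{-n}(V_1^{(k)})\cap\cdots\cap f^{-mn}(V_m^{(k)})\bigr),
\]
which is open in $X$. The key observation is that $x\in W_k$ holds if and only if the $\fsm$-orbit of the diagonal point $(x,\ldots,x)\in\Xm$ meets $B_k$ at some positive time. Consequently, $(x,\ldots,x)$ has dense $\fsm$-orbit in $\Xm$ if and only if $x\in\bigcap_{k\in\N} W_k$.

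For the direction ``$\Delta$-transitivity implies the intersection condition'', fix nonempty open $U,V_1,\ldots,V_m$. Pick $x\in U$ whose diagonal has dense $\fsm$-orbit; such a point exists because the set of these $x$ is residual in $X$ and therefore meets $U$. Denseness then produces some $n\ge 1$ with $(\fsm)^n(x,\ldots,x)\in V_1\times\cdots\times V_m$, which translates to $x\in U\cap\bigcap_{i=1}^m f^{-in}(V_i)$, as required.

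For the converse, the hypothesis says exactly that each $W_k$ meets every nonempty open subset of $X$, so each $W_k$ is dense open. The Baire category theorem, applicable because $X$ is compact metric, yields that $\bigcap_k W_k$ is a dense $G_\delta$; by the key observation this is precisely the set of points whose diagonal has dense $\fsm$-orbit, so $\mathbf{(\star\star)}$ holds. The only minor nuisance is excluding the time $n=0$ in the forward direction, which is harmless: whenever the box $V_1\times\cdots\times V_m$ is not a singleton one may shrink each $V_i$ slightly to avoid $(x,\ldots,x)$ before invoking denseness, and the degenerate case of isolated points is handled directly.
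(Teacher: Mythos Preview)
The paper does not supply its own proof of this statement; it is quoted from \cite{TKSM} (Proposition~1 there), so there is nothing to compare against directly. Your Baire-category argument is the natural one and is essentially correct.

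The one soft spot is the last paragraph. Your phrase ``handled directly'' for the isolated-point case hides a nontrivial step, and the shrinking manoeuvre is phrased a bit loosely (you choose $x$ before you know which $V_i$ to shrink, so the order of quantifiers needs care). A clean way to dispose of the $n=0$ issue once and for all is to note that a $\Delta$-transitive system on a space with more than one point has no isolated points: if $p$ were isolated, then $p$ lies in the residual set for $m=2$, and applying density of the $f\times f^2$-orbit of $(p,p)$ to a box $\{p\}\times W$ with $W$ open, nonempty and $p\notin W$ forces $f^n(p)=p$ for some $n\ge 1$; hence $X$ equals the finite orbit of $p$, but one checks easily that no cyclic permutation of a finite set with at least two elements is $\Delta$-transitive. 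Once $X$ is perfect, so is $X^m$, and a dense orbit meets every nonempty open set at arbitrarily large times, so $n\ge 1$ comes for free.
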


\begin{thm}[{\cite[Corollary~2]{TKSM}}]\label{MT:cor2}
Let $X$ be a compact metric space. If $f\colon X\ra X$ is a weakly mixing and syndetically transitive continuous map,
then $f^{(*m)}$ is also weakly mixing and syndetically transitive
for any $m\geq 1$. In particular, $f$ is multi-transitive.
\end{thm}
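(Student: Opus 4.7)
The plan is to proceed by induction on $m$, using the decomposition $f^{(\ast(m+1))} \cong f^{(\ast m)} \times f^{m+1}$ obtained from the identification $X^{m+1} \cong X^m \times X$. The base case $m=1$ is the hypothesis. For the inductive step, weak mixing of $f^{(\ast(m+1))}$ is the easy part: $f^{(\ast m)}$ is weakly mixing by induction and $f^{m+1}$ is weakly mixing as an iterate of the weakly mixing $f$, and the Cartesian product of two weakly mixing maps is weakly mixing (checked by rewriting $(g\times h)^{(\times 2)}$ as $g^{(\times 2)} \times h^{(\times 2)}$ and using that weakly-mixing $\times$ transitive is transitive).

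The substantive task is syndetic transitivity. For product open sets $U \times U'$ and $V \times V'$ in $X^m \times X$, the return-time set factors as
\[
N(U \times U', V \times V'; f^{(\ast(m+1))}) = S_1 \cap S_2, \quad S_1 = N(U, V; f^{(\ast m)}),\; S_2 = N(U', V'; f^{m+1}).
\]
By induction $S_1$ is syndetic, but since intersections of syndetic sets need not be syndetic, one must extract more structure from $S_2$. The key lemma I would establish is that, under the hypothesis on $f$, every return-time set $N(U,V;f)$ is \emph{thickly syndetic}: for each $k$ the set $\{n: [n, n+k] \subset N(U,V;f)\}$ is syndetic. Granting this, one checks that $S_2 = \{n : (m+1)n \in N(U', V'; f)\}$ is itself thickly syndetic, since an interval of length $(m+1)(k+1)$ inside $N(U',V';f)$ at a syndetic position contains an arithmetic progression with step $m+1$ and $k+1$ terms whose indices lie in $S_2$. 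Finally, thickly syndetic intersected with syndetic is syndetic: for any $n_0$ choose $c$ close to $n_0$ with $[c, c+L] \subset S_2$, where $L$ is the syndetic gap of $S_1$, and observe that the $L+1$ consecutive integers of $[c, c+L]$ must meet $S_1$ by syndeticity.

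The main obstacle is the lemma itself. Rewriting
\[
\{n : [n, n+k] \subset N(U, V; f)\} = N\bigl(U^{k+1},\; V \times f^{-1}(V) \times \cdots \times f^{-k}(V);\; f^{(\times(k+1))}\bigr)
\]
reduces the lemma to showing that every Cartesian power $f^{(\times n)}$ is syndetically transitive. I would prove this by a parallel induction on $n$, using the same "product preserves weakly-mixing-plus-syndetically-transitive" step now for two weakly mixing and syndetically transitive factors, with the thickly-syndetic-meets-syndetic-is-syndetic principle again doing the work. The interplay between the two inductive claims, and the upgrade from plain thickness (given by weak mixing alone) to thick syndeticity (using also syndetic transitivity), is where the technical weight of the argument really sits; once this is in place, multi-transitivity of $f$ is just the non-emptiness of the return-time set in the special case $U_i = U$, $V_i = V$.
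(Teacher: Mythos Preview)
The paper does not prove this theorem at all: it is quoted verbatim from \cite[Corollary~2]{TKSM} and used as a black box. So there is no ``paper's own proof'' to compare against, and your proposal must stand on its own.

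Most of your outline is sound, and the reduction of thick syndeticity of $N(U,V;f)$ to syndetic transitivity of $f^{(\times(k+1))}$ via
\[
\{n:[n,n+k]\subset N(U,V;f)\}=N\bigl(U^{k+1},\,V\times f^{-1}(V)\times\cdots\times f^{-k}(V);\,f^{(\times(k+1))}\bigr)
\]
is correct and useful. The problem is the ``parallel induction'' you propose for proving that $f^{(\times n)}$ is syndetically transitive. In the step $f^{(\times(n+1))}=f^{(\times n)}\times f$ you want to conclude that $N(U,V;f^{(\times n)})\cap N(U',V';f)$ is syndetic by invoking ``thickly syndetic meets syndetic is syndetic''. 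But which factor is thickly syndetic? Thick syndeticity of $N(U',V';f)$ is exactly your key lemma, which by your own reduction is equivalent to syndetic transitivity of $f^{(\times k)}$ for \emph{all} $k$; and thick syndeticity of $N(U,V;f^{(\times n)})$ would require syndetic transitivity of $f^{(\times nk)}$ for all $k$. Either way you are assuming what you are trying to prove: the gap bound $L$ needed in the intersection argument can be arbitrarily large compared to $n$, so the inductive hypothesis for indices $\le n$ does not supply the required length-$L$ blocks.

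The missing ingredient is the filter-base property of weakly mixing systems: if $f$ is weakly mixing then for any nonempty open $U_1,V_1,\ldots,U_n,V_n$ there exist nonempty open $A,B$ with $N(A,B;f)\subset\bigcap_{i=1}^n N(U_i,V_i;f)$ (this is the content of Furstenberg's argument that weak mixing implies transitivity of all $f^{(\times n)}$). Once you have this, syndetic transitivity of $f^{(\times n)}$ is immediate from syndetic transitivity of $f$ alone, with no induction needed; your key lemma then follows, and the rest of your argument for $f^{(\ast m)}$ goes through as written.
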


The induction step in a proof of Theorem~\ref{MT:thm4:general} is based on the following:

\begin{lem}\label{lem:thiner_struct_mt}
Let $X$ be a compact metric space.
If $f\colon X\ra X$ is multi-transitive continuous map, then for
any $m\ge 1$ and nonempty open sets $V_1,\ldots, V_m\subset X$ there
is a sequence of integers $\{k_n\}_{n=0}^\infty$ such that for each $n\ge 0$ we have $k_n-n>0$ and for each $i=1,\ldots,m$ there is a sequence $\{V_i^{(n)}\}_{n=0}^\infty$ of nonempty open subsets of $V_i$  such that
\[
f^{i k_j-j}(V_i^{(n)})\subset V_i%\qquad\text\qquad
\]
for $i=1,\ldots,m$, and $j=0,\ldots, n$.
\end{lem}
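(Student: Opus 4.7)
The plan is to induct on $n$. For the base case $n=0$, I apply multi-transitivity of $f$ directly to $V_1,\ldots,V_m$ (used simultaneously as sources and targets): there exist $k_0\ge 1$ and points $y_i\in V_i$ with $f^{ik_0}(y_i)\in V_i$, so continuity yields open neighborhoods $V_i^{(0)}\subset V_i$ of $y_i$ with $f^{ik_0}(V_i^{(0)})\subset V_i$. The positivity requirement $k_0-0>0$ is satisfied since $k_0\ge 1$.

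For the inductive step, suppose $k_0,\ldots,k_n$ and nested sets $V_i^{(0)}\supset\cdots\supset V_i^{(n)}$ have been constructed. Write $p:=n+1$. The key algebraic observation is the identity
\[
i(k'+p)-p \;=\; ik' + (i-1)p,
\]
valid for every $i$ and $k'$, which allows me to absorb the awkward $-p$ shift on the exponent into the target by precomposing with $f^{-(i-1)p}$. Accordingly, I apply multi-transitivity of $f$ to the source open sets $V_i^{(n)}$ and the target open sets $Z_i:=f^{-(i-1)p}(V_i)$; the $Z_i$ are nonempty open because multi-transitivity implies transitivity and hence surjectivity on the compact metric space $X$. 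This yields some $k'\ge 1$ and points $y_i\in V_i^{(n)}$ with $f^{ik'}(y_i)\in Z_i$, equivalently $f^{ik'+(i-1)p}(y_i)\in V_i$.

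Setting $k_{n+1}:=k'+p$, the identity rewrites this as $f^{ik_{n+1}-p}(y_i)\in V_i$, and the required positivity $k_{n+1}-(n+1)=k'>0$ holds automatically. Continuity then provides an open neighborhood $V_i^{(n+1)}\subset V_i^{(n)}$ of $y_i$ with $f^{ik_{n+1}-(n+1)}(V_i^{(n+1)})\subset V_i$; the remaining conditions $f^{ik_j-j}(V_i^{(n+1)})\subset V_i$ for $j=0,\ldots,n$ are immediate from the inductive hypothesis together with the inclusion $V_i^{(n+1)}\subset V_i^{(n)}$.

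The principal obstacle is the mismatch between the pattern $ik$ produced by multi-transitivity and the pattern $ik-j$ (with a uniform, $i$-independent subtracted term) demanded by the lemma. The identity above is exactly what resolves this, trading the uniform subtraction $-p$ on the exponent for a per-coordinate precomposition by $f^{(i-1)p}$ on the target side; surjectivity of $f$ is precisely what keeps the modified targets $Z_i$ nonempty and open, so that multi-transitivity can be invoked on them.
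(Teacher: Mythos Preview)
Your proof is correct and follows the same inductive scheme as the paper's: at each stage one applies multi-transitivity of $f^{(\ast m)}$ once and shrinks the $V_i^{(n)}$ to nested open subsets. The only tactical difference lies in how the exponent shift is engineered. The paper pulls back the \emph{source} sets uniformly, putting $U_i=f^{-n}(V_i^{(n-1)})$ and using $(f^{(\ast m)})^{k_n}(U)\cap W\neq\emptyset$ to obtain $f^{ik_n-n}(V_i^{(n-1)})\cap V_i\neq\emptyset$; you instead pull back the \emph{target} sets coordinatewise, setting $Z_i=f^{-(i-1)p}(V_i)$ and invoking your identity $i(k'+p)-p=ik'+(i-1)p$. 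Your variant has the small bonus that $k_{n+1}-(n+1)=k'>0$ is automatic, whereas the paper must appeal (implicitly) to the fact that $N(U,W;f^{(\ast m)})$ is infinite in order to pick $k_n>n$. Both routes use surjectivity of $f$ in the same way, to guarantee the relevant preimage sets are nonempty.
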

\begin{proof}
Let $V_1,\ldots,V_m$ be nonempty open subsets of $X$. Set $W=V_1\times\ldots \times V_m$.
We proceed by induction on $n$. From multi-transitivity of $f$
there is $k_0>0$ such that $(\fsm)^{k_0}(W)\cap W\neq \emptyset$, or equivalently
$f^{-ik_0}(V_i)\cap V_i\neq \emptyset$ for $i=1,\ldots,m$.
Put $V_i^{(0)}=f^{-ik_0}(V_i)\cap V_i\subset V_i$ for $i=1,\ldots,m$, to complete the base step.

For the induction step, suppose that $n\ge 1$ and we have found a sequence $k_0,\ldots,k_{n-1}$ and for each $i=1,\ldots,m$
we have nonempty open set $V_i^{(n-1)}\subset V_i$ such that
\begin{equation}\label{con:star}
f^{i k_j-j} (V_i^{(n-1)})\subset V_i\qquad\text{and}\qquad k_j-j>0, %\tag{$\ddag$}
\end{equation}
hold for $j=0,\ldots,n-1$.
For $i=1,\ldots,m$, let $U_i=f^{-n}(V_i^{(n-1)})$. Put $U=U_1\times \ldots \times U_m$. By multi-transitivity
we get an integer $k_{n}$ such that $k_n-n>0$ and $(\fsm)^{k_n}(U)\cap W\neq \emptyset$, or equivalently
$f^{-ik_{n}}(V_i)\cap U_i\neq \emptyset$, for $i=1,\ldots,m$.
Fix $1\le i\le m$. We have
\[
f^{ik_n}(U_i)\cap V_i= f^{ik_n}(f^{-n}(V_i^{(n-1)}))\cap V_i=f^{ik_n-n}(V_i^{(n-1)})\cap V_i.
\]
By the above, $V_i^{(n)}=V_i^{(n-1)}\cap f^{-ik_n+n}(V_i)$ is nonempty,
open, and clearly $f^{ik_n-n}(V_i^{(n)})\subset V_i$. Moreover,
$V_i^{(n)}\subset V_i^{(n-1)}$. Using \eqref{con:star}, we conclude that
\[
f^{i k_j-j} (V_i^{(n)})\subset V_i
\]
for $j=0,\ldots,n$. This completes the proof.
\end{proof}

%The following Lemma is well known (e.g. see survey article \cite{KTrans}).

%\begin{lem}\label{lem:trans-char}
%A continuous map $f$ of a compact metric space $X$ is transitive if and only if for every nonempty open set $U\subset X$ the set
%\[
%U^\# = \bigcup_{n=0}^\infty f^{-n}(U)
%\]
%is open and dense in $X$.
%\end{lem}

\begin{thm}\label{MT:thm4:general}
Let $X$ be a compact metric space. If $f\colon X\ra X$ is a weakly mixing and strongly transitive continuous map, then $f$ is $\Delta$-transitive.
\end{thm}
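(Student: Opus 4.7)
The plan is to reduce the statement to the equivalent condition provided by Theorem \ref{MT:prop1}, then attack it using Lemma \ref{lem:thiner_struct_mt} with strong transitivity as the new ingredient. First, since strong transitivity implies syndetic transitivity (as noted in Section 2), Theorem \ref{MT:cor2} applies and gives that $f^{(\ast m)}$ is weakly mixing and syndetically transitive for every $m\ge 1$; in particular, $f$ is multi-transitive, which makes Lemma \ref{lem:thiner_struct_mt} available. By Theorem \ref{MT:prop1}, it suffices to show that for every $m\ge 1$ and every nonempty open sets $U, V_1,\ldots,V_m \subset X$ there exists $n\ge 1$ with
\[
U \cap \bigcap_{i=1}^{m} f^{-in}(V_i) \neq \emptyset.
\]

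Fix such $U, V_1,\ldots,V_m$ and apply Lemma \ref{lem:thiner_struct_mt} to obtain the integer sequence $\{k_j\}_{j\ge 0}$ and, for each $i\le m$, a nested sequence $\{V_i^{(n)}\}$ of nonempty open subsets of $V_i$ with $f^{ik_j - j}(V_i^{(n)}) \subset V_i$ for $j \le n$. The key reformulation is that whenever $f^{j}(x) \in V_i^{(n)}$ one has $f^{ik_j}(x) = f^{ik_j - j}(f^j(x)) \in V_i$; hence if, for some common index $j$, a single point $x \in U$ can be made to have $f^j(x)$ appropriately placed relative to the $V_i^{(n)}$'s simultaneously, then $n = k_j$ furnishes the required witness.

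The new ingredient is strong transitivity of $f$: there is $M \ge 1$ with $\bigcup_{j=1}^{M} f^{j}(U) = X$, so every nonempty open subset of $X$ meets $f^{j}(U)$ for some $j \le M$. Taking $N$ sufficiently large and applying the multi-transitivity of $f^{(\ast m)}$ to the pair $(U^m,\, V_1^{(N)}\times\cdots\times V_m^{(N)})$ yields a common hitting time $s$ and (possibly different) witnesses $u_1,\ldots,u_m \in U$ with $f^{is}(u_i) \in V_i^{(N)}$ for each $i$. The closing and most delicate step is to coalesce these $m$ separate witnesses into a single point $u \in U$; I expect this to follow from a pigeonhole-style argument combining the $M$-step covering of $X$ by forward iterates of $U$ with the abundance of valid shifts $\{k_j\}_{j=0}^{N}$ supplied by the lemma, letting one align a common index $j$ with consistent source points. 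This coalescence is the main obstacle, and the lemma and the strong-transitivity covering are calibrated precisely to make it possible; everything else in the argument is routine bookkeeping.
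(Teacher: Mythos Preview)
The coalescence step is not a detail but the entire difficulty, and your sketch does not supply it. Applying multi-transitivity of $f^{(\ast m)}$ to $(U^m,\,V_1^{(N)}\times\cdots\times V_m^{(N)})$ yields a common time $s$ and a \emph{tuple} $(u_1,\ldots,u_m)\in U^m$ with $f^{is}(u_i)\in V_i^{(N)}$; there is no mechanism---pigeonhole or otherwise---to replace this tuple by a single $u\in U$. Your ``key reformulation'' as written asks for one $x\in U$ with $f^{j}(x)\in V_i^{(N)}$ for every $i$ at a common $j$, but $f^{j}(x)$ is a single point and the $V_i^{(N)}\subset V_i$ are subsets of arbitrary, typically disjoint, open sets. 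What one actually needs is a single point $y$ with $f^{in}(y)\in V_i^{(N)}$ for all $i$; that is essentially the $\Delta$-transitivity condition itself, so the argument is circular. The strong-transitivity covering only lets you pull each $f^{is}(u_i)$ back to some $w_i\in U$ along an orbit of length $j_i\le M$, but the $w_i$ and $j_i$ vary with $i$, and the shifts $k_0,\ldots,k_N$ do not synchronise them.

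The paper closes exactly this gap by induction on $m$. The inductive hypothesis, applied with $V_1^{(N)}$ in the role of $U$ and $V_2^{(N)},\ldots,V_{m+1}^{(N)}$ as targets, produces a \emph{single} point $x\in V_1^{(N)}$ with $f^{in}(x)\in V_{i+1}^{(N)}$ for $i=1,\ldots,m$. Taking any preimage $y$ with $f^{n}(y)=x$ shifts indices so that $f^{in}(y)\in V_i^{(N)}$ for all $i=1,\ldots,m+1$. Only now does strong transitivity enter: choose $z\in U$ and $j\le N$ with $f^{j}(z)=y$, and then the single shift $k_j$ from Lemma~\ref{lem:thiner_struct_mt} gives $f^{i(n+k_j)}(z)=f^{ik_j-j}(f^{in}(y))\in V_i$ for every $i$. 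The induction is the coalescence device you were looking for.
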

\begin{proof}
First, note that $f$ is multi-transitive by Theorem~\ref{MT:cor2}.
In particular, it is transitive, and surjective.

To prove that $f$ is $\Delta$-transitive we are going to use the equivalent condition provided by Theorem~\ref{MT:prop1}.
We will prove by induction on $m$ that for any nonempty open sets $U,V_1,\ldots,V_{m}\subset X$,
there exists $n\ge 1$ such that
\[
U\cap \bigcap_{i=1}^m f^{-in}(V_i)\neq\emptyset.
\]
For $m=1$ this statement simply follows from transitivity of $f$. Assume that we established the result for some $m\ge 1$.
We fix nonempty open sets $U$ and $V_1,\ldots,V_{m+1}$, and we want to show that
there are $n>0$ and $z\in U$ such that $f^{in}(z)\in V_i$ for $i=1,\ldots, m+1$.
By strong transitivity, $\bigcup_{j=1}^N f^j(U)=X$ for some $N>0$. Lemma~\ref{lem:thiner_struct_mt}  gives us
nonempty open sets $V_1^{(N)},\ldots,V^{(N)}_{m+1}$ and integers $k_0,\ldots,k_N$ such that
\[
f^{i k_l-l} (V_i^{(N)})\subset V_i \qquad\text{and}\qquad k_l>l,
\]
for $i=1,\ldots,m+1$ and $l=0,\ldots,N$. By the induction hypothesis we can find
$x\in V_1^{(N)}$ and $n>0$ such that $f^{in}(x)\in V_{i+1}^{(N)}$ for $i=1,\ldots,m$.
Clearly, there is $y\in X$ such that $f^n(y)=x$.
But strong transitivity gives us $f^j(z)=y$ for some $z\in U$ and $0\leq j \leq N$.
From the above we get
\begin{gather*}
f^{i(n+k_j)}(z)= f^{i(n+k_j)-j}(y)         =   f^{ik_j-j}(f^{in}(y)) =\\
               = f^{ik_j-j}(f^{(i-1)n}(x)) \in f^{ik_j-j}(V_i^{(N)})   \subset V_i
\end{gather*}
for any $i=1,2,\ldots,m+1$. We showed that
\[
z\in U \cap f^{-s}(V_1)\cap\ldots\cap f^{-s\cdot (m+1)}(V_{m+1}),
\]
where $s=n+k_j$, which completes the proof.
\end{proof}

\begin{thm}\label{thm:main}
Let $X$ be a compact metric space. If $f\colon X\ra X$ is a weakly mixing and minimal continuous map, then $f$ is $\Delta$-transitive.
\end{thm}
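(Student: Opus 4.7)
The plan is to deduce Theorem~\ref{thm:main} as an immediate corollary of Theorem~\ref{MT:thm4:general}: the two statements differ only in that minimality is assumed in place of strong transitivity, so the only thing left to check is that a minimal continuous self-map of a compact metric space is automatically strongly transitive.

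First, I would observe that $f$ must be surjective, since $f(X)$ is a nonempty closed forward-invariant subset of $X$ and minimality forces $f(X)=X$; consequently $f^N$ is surjective for every $N\ge 0$. Next, fix a nonempty open set $U\subset X$. Minimality also implies that the forward orbit of every point is dense (its closure is a nonempty closed forward-invariant set), so for every $x\in X$ there exists some $n\ge 0$ with $f^n(x)\in U$. In other words, $X=\bigcup_{n\ge 0}f^{-n}(U)$, which is an open cover of the compact space $X$. Extracting a finite subcover gives $N\ge 0$ with $X=\bigcup_{n=0}^{N}f^{-n}(U)$.

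To convert this preimage cover into an image cover, fix $y\in X$ and use surjectivity of $f^N$ to write $y=f^N(x)$ for some $x\in X$. Since $x\in f^{-n}(U)$ for some $0\le n\le N$, we obtain $y=f^{N-n}(f^n(x))\in f^{N-n}(U)$, so $X=\bigcup_{k=0}^{N}f^k(U)$. Applying $f$ once more and using $f(X)=X$ yields
\[
X \;=\; f(X) \;=\; \bigcup_{k=0}^{N} f^{k+1}(U) \;=\; \bigcup_{j=1}^{N+1} f^j(U),
\]
which is precisely the strong transitivity condition from the preliminaries. Since weak mixing is part of the hypothesis, Theorem~\ref{MT:thm4:general} then delivers $\Delta$-transitivity of $f$.

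There is no serious obstacle in this argument beyond the small bookkeeping step that passes from preimages to images; this is the only place where surjectivity of the iterates (forced automatically by minimality) together with compactness of $X$ are both used, and the only point at which non-invertibility of $f$ might in principle have been a worry. Nothing in the reasoning requires $f$ to be a homeomorphism.
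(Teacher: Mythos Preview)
Your argument is correct and follows exactly the paper's route: reduce to Theorem~\ref{MT:thm4:general} by observing that every minimal continuous self-map of a compact metric space is strongly transitive. The only difference is that the paper simply cites this last fact (referring to \cite[Theorem~2.5(8)]{KST}), whereas you supply a short self-contained proof of it.
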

\begin{proof}
It is well known that any minimal map (invertible or not) on a compact metric space is strongly transitive (see \cite[Theorem 2.5(8)]{KST} for a proof).
We apply Theorem~\ref{MT:thm4:general} to finish the proof.
\end{proof}

Now we may formulate a general version of \cite[Corollary 7]{TKSM}, which was stated there for homeomorphisms.
Only the implication given by Theorem~\ref{thm:main} is new here. The rest of the proof is identical as in \cite{TKSM}.

\begin{thm}\label{thm:MMT}
Let $f\colon X\ra X$ be a minimal continuous map on a compact metric space $X$.
Then the following are equivalent:
\begin{enumerate}
\item $f\times f^2$ is transitive.
\item $f$ is multi-transitive.
\item $f$ is weakly mixing.
\item $f$ is $\Delta$-mixing.
\end{enumerate}
\end{thm}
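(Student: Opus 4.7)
The plan is to prove the four conditions equivalent via the cycle $(3)\Rightarrow(4)\Rightarrow(2)\Rightarrow(1)\Rightarrow(3)$. Only the first implication in this cycle is the genuinely new content of this theorem; the remaining three either are immediate from the definitions or were already established for general continuous maps in \cite{TKSM}.

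For $(3)\Rightarrow(4)$ I would simply invoke Theorem~\ref{thm:main}. By \cite[Theorem 2.5(8)]{KST}, any minimal continuous map on a compact metric space is strongly transitive, so a minimal weakly mixing $f$ meets the hypotheses of Theorem~\ref{MT:thm4:general}, and hence is $\Delta$-transitive. This is the sole place where minimality (as opposed to the weaker hypothesis of \emph{strong transitivity}) is actually used, and it is also the only place where the new proof of Theorem~\ref{MT:thm4:general} is needed to drop invertibility.

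The implication $(4)\Rightarrow(2)$ is free: $\Delta$-transitivity states that for each $m$ there is a residual $Y\subset X$ such that every $x\in Y$ has dense $\fsm$-orbit at $(x,\ldots,x)\in \Xm$; any such orbit certifies transitivity of $\fsm$, so $f$ is multi-transitive. The implication $(2)\Rightarrow(1)$ is the trivial specialization $m=2$ of multi-transitivity.

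For $(1)\Rightarrow(3)$ I would reproduce the argument from \cite{TKSM}: given nonempty open sets $U_1,U_2,V_1,V_2\subset X$, one wants a common $n$ with $f^n(U_i)\cap V_i\neq\emptyset$ for $i=1,2$. The idea is to exploit the fact that minimality of $f$ forces each return set $N(U,V)$ to be syndetic, and then use transitivity of $f\times f^2$ to produce a time $n$ giving the desired intersection simultaneously on both coordinates; the quadratic iterate $f^{2n}$ appearing in the second coordinate of $f\times f^2$ can be cashed into an ordinary iterate $f^n$ by shifting along a syndetic return time in $N(V_2,V_2)$. The main obstacle in this part is precisely this passage from $f\times f^2$-transitivity to $f\times f$-transitivity, but since the reasoning is identical to the homeomorphism case of \cite[Corollary 7]{TKSM} and uses no invertibility, we can quote it directly and conclude.
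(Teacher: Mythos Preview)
Your proposal is correct and matches the paper's approach exactly: the paper's proof consists solely of the remark that the implication $(3)\Rightarrow(4)$ is supplied by Theorem~\ref{thm:main}, while all remaining implications are taken verbatim from \cite[Corollary~7]{TKSM}. Your cycle $(3)\Rightarrow(4)\Rightarrow(2)\Rightarrow(1)\Rightarrow(3)$, with the first arrow via Theorem~\ref{thm:main} and the last quoted from \cite{TKSM}, is precisely this.
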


\section{Weak mixing and multi-transitivity}

In \cite[page 10]{TKSM} T.~K.~S.~Moothathu asked the following question
\begin{question}
Are there any implications between weak mixing and multi-transitivity?
\end{question}
%As we saw above (see Theorem~\ref{MT:cor2} above) if $f$ is syndetically transitive and weakly mixing is always multitransitive.
%Furthermore, in the context of minimal systems, weak mixing and multi-transitivity are equivalent
%notions \cite[Corollary~5]{TKSM}.

The aim of this section is to show that these notions are not related in a general situation, that is a continuous map can be multi-transitive and not weakly mixing, or weakly mixing and not multi-transitive. As it is often the case, to finish our task we will construct a symbolic systems.

Consider the set $A=\set{0,1}$ endowed with the discrete topology. Let $\Sigma$ denote the set of all infinite sequences of $0$'s and $1$'s regarded as the product of infinitely many copies of $A$ with the product topology. All sequences $x\in\Sigma$ are indexed by nonnegative integers, $x=x_0x_1x_2\ldots$. Then the \emph{shift} transformation is a continuous map $\sigma\colon\Sigma\ra\Sigma$ given by $\sigma(x)=y$, where $x=(x_i)$, $y=(y_i)$, and $y_i=x_{i+1}$ for $i=0,1,\ldots$. Any closed subset $X\subset\Sigma$ invariant for $\sigma$ is called a \emph{subshift} of $\Sigma$. A \emph{word} is a finite sequence of elements of $\set{0,1}$. The \emph{length} of a word $w$ is just the number of elements of $w$, and is denoted $|w|$. We say that a word $w=w_1w_2\ldots w_l$ appears in $x=(x_i)\in\Sigma$ at position $t$ if $x_{t+j-1}=w_j$ for $j=1,\ldots,l$. If $X$ is a subshift, then the \emph{language} of $X$ is the set $\mathcal{L}(X)$ of all words which appear at some position in some element $x\in X$. For any word $w$ let $[w]_t$ denote the element of the sequence $w$ standing at position $t$ and let $\Sp(w)=\set{|i-j|\;:\; [w]_i=[w]_j=1, i\neq j }$.
The set $\mathcal{L}_n(X)$ consists of all elements of $\mathcal{L}(X)$ of length $n$.

Let $P$ be a set of nonnegative integers. We say that a word $w=w_1w_2\ldots w_l$ is \emph{$P$-admissible} if $w_i=w_j=1$ for some $1\le i < j\le l$ implies $|i-j|\in P$, equivalently, if $\Sp(w)\subset P$.
Let $\Sigma_P$ be the subset of $\Sigma$ consisting of all sequences $x$ such that every word which appears in $x$ is $P$-admissible. It is easy to see that $\Sigma_P$ is a subshift, and $\mathcal{L}(\Sigma_P)$ is the set of all $P$-admissible words. We will write $\sigma_P$ for $\sigma$ restricted to $\Sigma_P$, and call the dynamical system given by $\sigma_P\colon\Sigma_P\ra\Sigma_P$ a \emph{spacing} shift. The class of spacing shifts was introduced by Lau and Zame in \cite{LZ}, and for a detailed exposition of their properties we refer to \cite{Spacing}.

Let $w$ be a $P$-admissible word. By $[w]_P$ we denote the set of all $x\in\Sigma_P$ such that the word $w$ appears at position $0$ in $x$. We call the set $[w]_P$ a \emph{$P$-admissible cylinder} (a \emph{cylinder} for short). The family of $P$-admissible cylinders is a base of topology of $\Sigma_P$ inherited from $\Sigma$. It is easy to see that definition of a spacing shift implies that $N([1]_P,[1]_P;\sigma_P)=P$. Moreover, $\sigma_P$ is weakly mixing if and only if $P$ is a \emph{thick} set (see \cite{LZ,Spacing}). A thick set is a subset of integers that contains arbitrarily long intervals ($P$ is thick if and only if for every $n$, there is  some $k$ such that $\{k,k+1,\ldots,k+n-1\}\subset P$).
If $w$ is a word and $n\ge 1$ then by $w^n$ we denote a word which is a concatenation of $n$ copies of $w$. If $n=0$ then $w^n$ is the \emph{empty word}.

\subsection{Multi-transitive and not weakly mixing example}\label{sec:ex1}
The results of this section generalize construction of totally transitive not weakly mixing spacing shift
presented in \cite{Spacing}.
%While the presented construction is a little more complex, the main idea is the same.

We say that a finite set $S\subset \N$ is \emph{$q$-dispersed}, where $q\ge 2$, if for every $a,b\in S\cup\{0\}$ such that $a\neq b$
we have $|a-b|\ge q$.

\begin{lem}\label{lem:MTtool-NEW}
Let $M,N$ be positive integers such that $M\ge 3$ and let  $A\subset \N$ be an $M$-dispersed finite set.
Then there exists an $M$-dispersed finite set $B$ containing $A$ and such that
for $k=\max A+1$ and any pair of sequences of words $u_1,\ldots,u_N$ and $v_1,\ldots,v_N$ from $\lang_k(\Sigma_B)$ there is $n\geq 0$ such that
\[
\sigma^{in}([u_i]_B)\cap [v_i]_B\neq \emptyset \quad \text{ for }i=1,\ldots,N.
\]
\end{lem}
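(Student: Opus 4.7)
The plan is to build $B$ by adjoining to $A$, for each of the finitely many pairs of $N$-tuples of length-$k$ $A$-admissible words, a small batch of large integers, placed so that $M$-dispersion is preserved and each word $u_i \cdot 0^{in-k} \cdot v_i$ becomes $B$-admissible. Since $B$-admissibility of a length-$k$ word depends only on $B \cap \{1, \ldots, k-1\}$, and we will only add integers exceeding $k-1$, the equality $\lang_k(\Sigma_B) = \lang_k(\Sigma_A)$ will hold throughout; so the pairs we must prepare for are exactly the finitely many pairs of $N$-tuples of length-$k$ $A$-admissible words, which I would enumerate as $\Pi_1, \ldots, \Pi_T$, with $\Pi_j$ determining $1$-position sets $P_i^{(j)}, Q_i^{(j)} \subset \{0, \ldots, k-1\}$ for $i = 1, \ldots, N$.

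The combinatorial core of the argument is the claim that for any $A$-admissible words $u, v$ of length $k$ with $1$-position sets $P, Q$, the difference set $D = \{q - p : p \in P,\, q \in Q\}$ is itself $M$-dispersed. I would prove this by a case split on two distinct elements $q - p, q' - p' \in D$: if $p = p'$ or $q = q'$, the gap reduces to a nonzero pairwise distance in $Q$ or $P$, which lies in $A$ and hence is $\geq M$; otherwise both $q - q'$ and $p - p'$ lie in $A \cup (-A)$ with absolute value at least $M$, and a short sign check (same sign: $M$-dispersion of $A$; opposite signs: the absolute difference is a sum of two numbers $\geq M$) shows that either $q - p = q' - p'$ (violating distinctness) or $|(q-p) - (q'-p')| \geq M$. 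Consequently each shifted set $in_j + (Q_i^{(j)} - P_i^{(j)})$ is automatically $M$-dispersed regardless of the choice of $n_j$.

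Next I would inductively choose $n_1 < \ldots < n_T$ with $n_1 \geq 2(k-1) + M$ and $n_j \geq N n_{j-1} + 2(k-1) + M$ for $j \geq 2$, forcing the $NT$ intervals $[in_j - (k-1),\, in_j + (k-1)]$ to be pairwise $M$-apart and all strictly above $\max A$. Setting
\[
B = A \cup \bigcup_{j=1}^T \bigcup_{i=1}^N \bigl(in_j + (Q_i^{(j)} - P_i^{(j)})\bigr),
\]
the combinatorial claim yields $M$-dispersion inside each window, while the spacing of the $n_j$'s gives $M$-dispersion between windows and from $A \cup \{0\}$, so $B$ is a finite $M$-dispersed superset of $A$.

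To verify the main property, given tuples $(u_i), (v_i)$ in $\lang_k(\Sigma_B) = \lang_k(\Sigma_A)$, I would identify the matching index $j^*$, set $n = n_{j^*}$, and for each $i$ check that $u_i \cdot 0^{in-k} \cdot v_i$ is $B$-admissible: intra-$u_i$ and intra-$v_i$ distances lie in $A \subset B$ by admissibility, while the cross-distances are exactly $in + (Q_i - P_i) \subset B$ by construction. Extending this finite word by zeros on the right produces a point of $\Sigma_B$ witnessing $\sigma^{in}([u_i]_B) \cap [v_i]_B \neq \emptyset$. The main obstacle I anticipate is the combinatorial claim that $Q - P$ is $M$-dispersed; once that is secured, the rest is careful bookkeeping and sufficient spacing of the $n_j$'s to glue every pair's requirements into a single $M$-dispersed finite set $B$.
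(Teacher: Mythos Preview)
Your argument is correct and follows the same route as the paper: you enumerate pairs of $N$-tuples, choose well-spaced integers $n_j$, and set $B = A \cup \bigcup_{i,j}\bigl(in_j + (Q_i^{(j)} - P_i^{(j)})\bigr)$, which is exactly the paper's $B = \bigcup_{i,j}\Sp\bigl(u_i^{(j)} 0^{il_j - k} v_i^{(j)}\bigr)$ rewritten via position sets; your isolated combinatorial claim about $Q-P$ is precisely the paper's Case~IIC, and your linear spacing condition $n_j \ge N n_{j-1} + 2(k-1) + M$ is a harmless simplification of the paper's $(N+1)^j$-growth. One terminological slip worth noting: the set $D = Q - P$ is not literally $M$-dispersed in the paper's sense (it may contain elements of absolute value below $M$, e.g.\ when $u$ and $v$ each have a single $1$), but the property you actually prove and use---that pairwise distances within $D$ are at least $M$---is correct, and your choice of $n_j$ supplies the required distance from $0$ after shifting.
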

\begin{proof}
Let $k=\max A +1$.
Let $m=|\lang_k(\Sigma_A)|^{2N}$ be the cardinality of the set of all $N$-element sequences of pairs of words from
$\lang_k(\Sigma_A)$.
We enumerate all members of this set as a list $W^{(1)}, \ldots , W^{(m)}$. Hence, each $W^{(j)}$ is an ordered list of $N$ pairs of words from
$\lang_k(\Sigma_A)$:
\[
W^{(j)}=\left((u^{(j)}_1,v^{(j)}_1),\ldots,(u^{(j)}_N,v^{(j)}_N)\right),\qquad\text{for each}\;j=1,\ldots,m,
\]
where $(u^{(j)}_i,v^{(j)}_i)\in \lang_k(\Sigma_A)\times \lang_k(\Sigma_A)$ for every $i=1,\ldots,N$.
Choose integers $l_1,\ldots,l_m$ fulfilling the following conditions
\begin{align}
l_1
&\ge 2k+ M - 1,\label{cond-l1}\\
l_{j+1} &\ge (N+1)^{j} l_j.\label{cond-l2}
\end{align}
Given $1\le i \le N$ and $1\le j \le m$ we define
\[
w_i^{(j)}= u_i^{(j)}0^{i l_j-k}v_i^{(j)},
\]
where $l_1,\ldots,l_m$ are as above.
Using \eqref{cond-l1} and \eqref{cond-l2}, it is easy to see that
\begin{equation}\label{cond-l3}
[i l_\alpha-k+1,i l_\alpha+k-1]\cap [j l_\beta-k+1,j l_\beta+k-1]=\emptyset,
\end{equation}
for $1\le\alpha,\beta\le m$, $\alpha\neq\beta$ and $1\le i,j  \le  N$.
Let
\[
B=\bigcup_{j=1}^m \bigcup_{i=1}^N \Sp(w_i^{(j)}).
\]
If $n\in A$ then let $u=10^{n-1}10^{k-n-1}$. Clearly, $n\in\Sp(u)$ and $u\in \lang_k(\Sigma_A)$, since $k=\max A +1$. This gives $A\subset B$.
The construction of $w^{(j)}_i$ implies
that for $1\le i \le N$ and $1\le j \le m$ we have
\begin{equation}\label{cond-r}
\Sp(w^{(j)}_i)\setminus A\subset [il_j-k+1,il_j+k-1].
\end{equation}
Therefore,
\begin{equation}\label{cond-min}
\min B\setminus A\ge l_1 - k +1 \ge M+ k.
\end{equation}
In particular, $\min B=\min A\ge M$. Moreover,
we conclude form \eqref{cond-l3} and \eqref{cond-r} that if $r\in B\setminus A$, then there are unique indexes $i(r)$ and $j(r)$ such that $r\in\Sp(w^{(j(r))}_{i(r)})$.

Next, we are going to prove that $B$ is $M$-dispersed, that is, $|q-p|\ge M$ for each $q,p\in B$, $q\neq p$. We consider three cases:
\begin{description}
\item[Case I] \label{case-one} Both $p$ and $q$ belong to $A$. %$\mathbf{p,q\in A.}$
\item[Case II] \label{case-two} Both $p$ and $q$ belong to $B\setminus A$. %$\mathbf{p,q\in B\setminus A.}$
\item[Case III] \label{case three} None of the above cases hold. %\textbf{none of the above.}
\end{description}
The first case is clear, since $A$ is $M$-dispersed. The third case follows from \eqref{cond-min}. To prove the remaining case,
\textbf{Case II}, we consider subcases. But first note that in the computations below we use (\ref{cond-l1} - \ref{cond-r})
without further reference.
Given $p,q\in B\setminus A$ consider:
\begin{description}
\item[Case IIA] \label{case-2A} ${j(p)\neq j(q)}$. Without lost of generality we assume $j(q)>j(p)$. We have
\begin{eqnarray*}
q &\ge& i(q) l_{j(q)}-k+1 \ge  l_{j(q)}-k+1\\
&\ge& (N+1) l_{j(p)}-k+1\ge N l_{j(p)}-k+1 +l_1\\
&\ge& i(p) l_{j(p)}+k+M \ge p+M.
\end{eqnarray*}
But then
\[
q-p\ge M.
\]
\item[Case IIB] \label{case-2B} ${j(p)=j(q)}$, but ${i(p)\neq i(q)}$. Without lost of generality we assume $i(q)>i(p)$. Let $j=j(p)=j(q)$. Then
\begin{eqnarray*}
q &\ge& i(q) l_{j}-k+1 \ge (i(p)+1)\cdot l_{j}-k+1\\
&\ge& i(p) l_{j}-k+1 +l_1\ge i(p) l_{j}+k +M\ge p+M.
\end{eqnarray*}
Hence,
\[
q-p \ge M.
\]
\item[Case IIC] \label{case 2C} ${j(p)=j(q)}$, and ${i(p)= i(q)}$. Let $j=j(p)=j(q)$ and $i=i(p)= i(q)$.
For $r\in\{p,q\}$ we define
\[
s(r)=\min\left\{s: [w_{i}^{(j)}]_s=[w_{i}^{(j)}]_{s+r}=1\right\}.
\]
Clearly, either $s(p)\neq s(q)$, or $s(p)+p\neq s(q)+q$. We have
\begin{eqnarray*}
|q-p|&=&| (s(q)+q)-s(q) - (s(p)+p-s(p))|\\
&=&|(s(q)+q)-(s(p)+p) - (s(q)-s(p))|\\
&\ge& | |(s(q)+q)-(s(p)+p)| - |s(q)-s(p)||.
\end{eqnarray*}
But $|(s(q)+q)-(s(p)+p)|,|s(q)-s(p)|\in A\cup \{0\}$, so either
$|(s(q)+q)-(s(p)+p)|\neq |s(q)-s(p)|$ and then
\[|(s(q)+q)-(s(p)+p)| - |s(q)-s(p)|\ge M,\]
or $|(s(q)+q)-(s(p)+p)|=|s(q)-s(p)|\neq 0$, and then
\[
|q-p|\ge 2M.
\]
\end{description}

It remains to prove that for any pair of sequences of words $u_1,\ldots,u_N$ and $v_1,\ldots,v_N$ from $\mathcal{L}_k(\Sigma_B)$ there is $n\geq 0$ such that
\[
\sigma^{in}([u_i]_B)\cap [v_i]_B\neq \emptyset \quad \text{ for }i=1,\ldots,k.
\]
Observe that $\lang_k(\Sigma_B)= \lang_k(\Sigma_A)$, since $\min B\setminus A \ge k$, $\max A+1=k$, and $A\subset B$.
Therefore, according to our notation defined at the beginning of the proof,
for any two sequences of words $u_1,\ldots,u_N$ and $v_1,\ldots,v_N$ from $L_k(\Sigma_B)$, there is $j=1,\ldots,m$ such that
\[
 W^{(j)}=((u_1,v_1),\ldots,(u_N,v_N)).
\]
Let $w^{(j)}_i=u_i0^{il_j-k}v_i$ as above.
Clearly, $w^{(j)}_1,\ldots,w^{(j)}_N\in L(\Sigma_B)$, and from the definition of $w^{(j)}_i$ we conclude that
\[
\sigma^{in}\left(w^{(j)}_i\right)\in\sigma^{in}([u_i]_B)\cap [v_i]_B\qquad \text{for}\;n=l_j.
\]
Hence,
\[
\sigma^{in}([u_i]_B)\cap [v_i]_B\neq \emptyset \quad \text{ for }i=1,\ldots,N,
\]
where $n=l_j$.
\end{proof}

\begin{thm}\label{ex:MTnotWM}
There exists a set $P\subset\N$ such that the spacing shift $(\Sigma_P,\sigma_P)$ is multi-transitive but not weakly mixing.
\end{thm}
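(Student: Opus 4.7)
The plan is to build $P$ as an increasing union $P=\bigcup_{n\ge 0}A_n$ of finite sets, each of which is $M$-dispersed for a fixed $M\ge 3$, in such a way that every instance of the multi-transitivity condition for $\sigma_P$ is eventually solved by Lemma~\ref{lem:MTtool-NEW}. Since every $A_n$ is $M$-dispersed, so is $P$; in particular $P$ has no two consecutive elements, so it is not thick and hence $\sigma_P$ is not weakly mixing by the Lau--Zame characterization recalled above. The remaining content of the proof is the verification of multi-transitivity.

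Fix $M=3$, pick any nonempty $M$-dispersed $A_0$, and fix an enumeration $(L_1,N_1),(L_2,N_2),\ldots$ of $\N\times\N$ in which every pair appears infinitely often. Inductively, given $A_{n-1}$, first pass to an $M$-dispersed set $A'_{n-1}\supseteq A_{n-1}$ with $\max A'_{n-1}+1\ge L_n$ by adjoining, if necessary, a single integer chosen well above $\max A_{n-1}$; then apply Lemma~\ref{lem:MTtool-NEW} to $A'_{n-1}$ with parameter $N_n$ to obtain an $M$-dispersed $A_n\supseteq A'_{n-1}$ for which, writing $k_n:=\max A'_{n-1}+1\ge L_n$, every pair of tuples $u_1,\ldots,u_{N_n},v_1,\ldots,v_{N_n}\in\lang_{k_n}(\Sigma_{A_n})$ admits a positive integer $n^*$ with
\[
\sigma^{in^*}([u_i]_{A_n})\cap[v_i]_{A_n}\neq\emptyset\quad\text{for each }i=1,\ldots,N_n.
\]
Set $P:=\bigcup_n A_n$.

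To verify multi-transitivity of $\sigma_P$, fix $N\ge 1$ and words $u_1,\ldots,u_N,v_1,\ldots,v_N\in\lang(\Sigma_P)$; by independently extending each of them to a longer admissible word, which only shrinks its $P$-cylinder, I may assume a common length $L$. Since $\lang(\Sigma_P)=\bigcup_m\lang(\Sigma_{A_m})$, some $m_0$ satisfies $u_i,v_i\in\lang_L(\Sigma_{A_{m_0}})$ for all $i$, and by the enumeration I can choose $n>m_0$ with $L_n\ge L$ and $N_n\ge N$. Then $k_n\ge L$ and $A_{m_0}\subseteq A'_{n-1}$, so each $u_i,v_i$ extends to a word $u'_i,v'_i\in\lang_{k_n}(\Sigma_{A'_{n-1}})=\lang_{k_n}(\Sigma_{A_n})$ by truncating to length $k_n$ any realization of it inside $\Sigma_{A'_{n-1}}$. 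Lemma~\ref{lem:MTtool-NEW} applied at stage $n$ then yields a positive $n^*$ and witnesses lying in $\Sigma_{A_n}\subseteq\Sigma_P$; since $[u'_i]_{A_n}\subseteq[u_i]_P$ and $[v'_i]_{A_n}\subseteq[v_i]_P$, this immediately gives $\sigma^{in^*}([u_i]_P)\cap[v_i]_P\neq\emptyset$ for each $i$, which is the $N$-tuple case of multi-transitivity.

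The main technical point will be the two-way passage between the local subshifts $\Sigma_{A_n}$, in which Lemma~\ref{lem:MTtool-NEW} operates, and the global $\Sigma_P$, over which multi-transitivity is required. Three features make this work: the lemma preserves the dispersion constant $M$, which is what keeps $P$ non-thick; the inclusion $\Sigma_{A_n}\subseteq\Sigma_P$ lets the lemma's witnesses be reinterpreted as witnesses in $\Sigma_P$; and the identity $\lang(\Sigma_P)=\bigcup_m\lang(\Sigma_{A_m})$, combined with the fact that every pair $(L,N)$ appears infinitely often in the enumeration, ensures that every task posed inside $\sigma_P$ is eventually captured by the hypothesis of the lemma at some stage $n>m_0$.
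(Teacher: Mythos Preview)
Your argument is correct and follows essentially the same route as the paper: build $P$ as an increasing union of $M$-dispersed finite sets via repeated applications of Lemma~\ref{lem:MTtool-NEW}, so that $P$ stays $M$-dispersed (hence non-thick, hence not weakly mixing) while every multi-transitivity task is eventually solved at some finite stage and the witnesses lift from $\Sigma_{A_n}$ to $\Sigma_P$. The only difference is bookkeeping: the paper simply takes $N=n$ at stage $n$ and lets $\max P_n$ grow automatically, whereas you schedule the tasks through an enumeration of $\N\times\N$ and insert the intermediate set $A'_{n-1}$ to force $k_n\ge L_n$; this makes your construction slightly more elaborate than necessary but not incorrect (one small point you leave implicit is the padding of $N$-tuples to $N_n$-tuples when $N_n>N$, which the paper handles by setting $u_j=v_j=u_m$ for the extra indices).
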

\begin{proof}
Fix any integer $M\ge 3$ and denote $P_0=\set{M}$. Define a sequence of sets $P_n\subset \N$ $(n\ge 1)$ inductively by putting  $P_{n+1}=B$, where $B$ is the set obtained for $A=P_n$, $N=n$, and $M$ as above by Lemma~\ref{lem:MTtool-NEW}. Denote
\[
P=\bigcup_{n=0}^\infty P_n.
\]
Easy induction gives  $|p-q|\geq M$ for every distinct $p,q\in P$ and  $P_0 \varsubsetneq P_1 \varsubsetneq P_2 \varsubsetneq \ldots$. In particular $P$ is not thick, so $\Sigma_P$ is not weakly mixing.
We are going to show that $\sigma_P\times \sigma_P^2 \times \ldots \times \sigma_P^m$ is transitive for
any $m=1,2,\ldots$. Fix any integer $m\ge 1$ and choose any open sets $U_1,\ldots, U_m, V_1,\ldots, V_m\subset\Sigma_P$.
Without loss of generality, we may assume that for each $1\le i \le m$ there are words $u_i,v_i\in \lang(\Sigma_P)$ such that $[u_i]_P\subset U_i$, and $[v_i]_P\subset V_i$.
We may also assume that for each $1\le i\le m$ we have $u_i,v_i\in \lang_k(\Sigma_{P_l})$ for some $l\ge m$ and $k=\max P_l+1$. The last equality implies that $\lang_k(\Sigma_{P_l})=\lang_k(\Sigma_P)$. If $m<l$ then we put $u_j=v_j=u_m$ for $j=m+1,\ldots,l$.

Now, by Lemma~\ref{lem:MTtool-NEW}, there is $j>0$  such that
\begin{eqnarray*}
\sigma_P^{ij}(U_i)\cap V_i &\supset & \sigma^{ij}([u_i]_P)\cap [v_i]_P\\
&\supset &\sigma^{ij}([u_i]_{P_l})\cap [v_i]_{P_l}\neq \emptyset
\end{eqnarray*}
for $i=1,\ldots,l$. We have just proved that $\sigma_P\times \sigma_P^2 \times \ldots \times \sigma_P^m$ is transitive for
any $m=1,2,\ldots$, which in other words means that $\sigma_P$ is multi-transitive.
\end{proof}

It is clear from the construction of $P$ in Lemma~\ref{lem:MTtool-NEW}, that the spacing shift $\sigma_P$ from the assertion of Theorem~\ref{ex:MTnotWM} is not syndetically transitive, since the set $P$, and as a result $N([1]_P,[1]_P)$, have thick complement. Then the following question arises:

\begin{question}
Does every multi-transitive and syndetically transitive system have to be weakly mixing?
\end{question}

\subsection{Weakly mixing and not multi-transitive example}\label{sec:ex2}

Fix $m\ge 2$. Let
\[
B(m,k)=\{m^{2k-1},m^{2k-1}+1,\ldots,m^{2k}-1\}, \qquad\text{and}\qquad P(m)=\bigcup_{k=1}^\infty B(m,k).
\]

Observe that for every $m\ge 2$ the set $P(m)$ has the following property
\begin{equation}\label{eq:not_mp}
p\in P(m) \implies m\cdot p\notin P(m).
\end{equation}

\begin{thm}\label{thm:WMnotMT}
Let $m\ge 2$ and $P=P(m)$ be as defined above. Then $\tau=\sigma_{P}\times\ldots \sigma_P^{m-1}$ is transitive, but $\tau\times\sigma_{P}^{m}$ is not transitive. In particular,
the spacing shift $(\Sigma_{P},\sigma_{P})$ is weakly mixing, but not multi-transitive.
\end{thm}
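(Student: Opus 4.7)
My plan is to handle the two claims about $\tau$ separately, and to deduce weak mixing from a direct thickness check on $P$.

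For the failure of transitivity of $\tau\times\sigma_P^m$, I would use the cylinder $[1]_P$. Since $N([1]_P,[1]_P;\sigma_P)=P$, transitivity applied to the box $[1]_P\times\ldots\times[1]_P$ (both as source and target) would force the existence of some $n>0$ with $in\in P$ for every $i=1,\ldots,m$. In particular $n\in P$ and $mn\in P$, contradicting property~\eqref{eq:not_mp}. This already gives that $\sigma_P$ is not multi-transitive.

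For the transitivity of $\tau$, I would reduce to basic open sets and fix words $u_i,v_i\in\lang_L(\Sigma_P)$ of common length $L$ for $i=1,\ldots,m-1$. The natural candidate for a witness is the one-sided sequence
\[
x_i=u_i\,0^{in-L}\,v_i\,000\ldots,
\]
which obviously belongs to $[u_i]_P$ and maps into $[v_i]_P$ after $in$ steps; the only nontrivial check is that $x_i\in\Sigma_P$. All spacings within $u_i$ or within $v_i$ are already in $P$, and the new spacings, between a $1$ at position $a\in\{0,\ldots,L-1\}$ of $u_i$ and a $1$ at position $in+b$ coming from the embedded $v_i$, take the form $in+b-a$. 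It thus suffices to find a single $n>0$ such that
\[
\{in-(L-1),\ldots,in+(L-1)\}\subset P\qquad\text{for every }i=1,\ldots,m-1.
\]
To produce such $n$, I would choose $k$ with $m^{2k-1}\ge mL$ and consider the interval
\[
I_k=\bigl[m^{2k-1}+L,\;(m^{2k}-L)/(m-1)\bigr].
\]
A direct estimate shows that $I_k$ is nonempty under this choice, and that for any $n\in I_k$ the values $n,2n,\ldots,(m-1)n$ together with their $(L-1)$-neighborhoods all lie inside $B(m,k)\subset P$. Any such $n$ then witnesses the transitivity of $\tau$.

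Weak mixing follows from the Lau--Zame criterion recalled in the preliminaries: the block $B(m,k)$ is an interval of length $m^{2k-1}(m-1)\to\infty$, so $P$ is thick. The only real obstacle in the whole argument is the geometric estimate above; the exponents in the definition of $B(m,k)$ are calibrated precisely so that the dilation $n\mapsto(m-1)n$ preserves the block $B(m,k)$ (with enough slack to absorb the length $L$), whereas $n\mapsto mn$ does not, which is exactly the juxtaposition required to make $\tau$ transitive while keeping $\tau\times\sigma_P^m$ non-transitive.
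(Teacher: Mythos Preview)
Your proposal is correct and follows essentially the same approach as the paper: the failure of transitivity via the cylinder $[1]_P$ and property~\eqref{eq:not_mp}, and the transitivity of $\tau$ by concatenating $u_i\,0^{in-L}\,v_i$ and choosing $n$ so that all cross-spacings land in a single block $B(m,k)$. The only cosmetic difference is that the paper pads the words to length $t=m^{2k}$ and writes down the explicit value $s=m^{2k+1}+m^{2k}$ (which is exactly the left endpoint of your interval $I_{k+1}$ when $L=m^{2k}$), whereas you keep $L$ arbitrary and exhibit a whole interval of admissible $n$.
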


\begin{proof}It is easy to see that $P$ is thick, hence $\sigma_P$ is weakly mixing. To prove that
$\tau=\sigma_{P}\times\ldots \sigma_P^{m-1}$ is transitive, we fix open cylinders
\[
[u^{(1)}]_{P},\ldots, [u^{(m-1)}]_{P},[v^{(1)}]_{P},\ldots, [v^{(m-1)}]_{P}\in \mathcal{L}(\Sigma_P).
\]
Without lost of the generality we may assume that there is $k\ge 1$
such that for any $i=1,\ldots,m-1$ we have $|u^{(i)}|=|v^{(i)}|=t$, where $t=m^{2k}$.
Set $s=m^{2k+1}+m^{2k}$ and define
\[%begin{eqnarray}
w^{(i)}=u^{(i)}0^{is-t}v^{(i)},\qquad\text{where }i=1,\ldots,m-1.
\]%end{eqnarray}
Clearly,
\[
[w^{(i)}]_{P}\subset \left(\sigma_{P}^{i}\right)^{-s}\left([v^{(i)}]_{P}\right)\cap [u^{(i)}]_{P},
\]
and therefore
\begin{eqnarray*}
&&[w^{(1)}]_{P}\times\ldots\times [w^{(m-1)}]_{P}\subset\\
&&\quad\quad\quad\tau^{-s}\left([v^1]_{P}\times\ldots\times [v^{(m-1)}]_{P}\right)\cap\left([u^{(1)}]_{P}\times\ldots\times [u^{(m-1)}]_{P}\right),
\end{eqnarray*}
so it is enough to prove that $[w^{(i)}]_{P}\neq\emptyset$, that is, $w^{(i)}\in\mathcal{L}(\Sigma_P)$.
%%%===================
It follows from definition of $w^{(i)}$ that
\[
\Sp(w^{(i)})=\Sp(u^{(i)})\cup \Sp(v^{(i)})\cup\{l-k:(l,k)\in \Delta\},
\]
where $\Delta$ is some subset of
%Let us assume that $[w^{(i)}]_k=[w^{(i)}]_l=1$ for some $k<l$. If $l<t$, then $[w^{(i)}]_k=[u^{(i)}]_k$ and $[w^{(i)}]_l=[u^{(i)}]_l$, so
%$l-k\in P$. If $is\le k$, then $k=is+q$ and $l=is+p$ for some $0\le p,q< t$ and $[w^{(i)}]_k=[v^{(i)}]_p$ and $[w^{(i)}]_l=[v^{(i)}]_q$, so
%$l-k=q-p\in P$. In the last case we have
\[\{0,\ldots,m^{2k}-1\}\times\{i\cdot m^{2k+1}+i\cdot m^{2k},\ldots,i\cdot m^{2k+1}+(i+1)\cdot m^{2k}-1\}.\]
Hence, we have
\[
l-k\in\{m^{2k+1},\ldots, m^{2k+2}-1\}\subset B(m,k+1),
\]
and $w^{(i)}\in \lang(\Sigma_P)$ as desired.
%%%===================
We proved that $\tau=\sigma_{P}\times\ldots \sigma_P^{m-1}$ is transitive.
To finish the proof it is enough to show that $\sigma_P\times\sigma^m_P$ is not transitive.
Let $U=V=[1]_P\times [1]_P$. It is easy to see from \eqref{eq:not_mp} that
\[
(\sigma_P\times\sigma^m_P)^n (U)\cap V = \emptyset
\]
for every $n\ge 0$, so $\sigma_P\times\sigma^m_P$ cannot be transitive.
\end{proof}

In the literature there are considered other recurrence properties stronger than weak mixing, see e.g. \cite{GlasnerRec}. It is natural to ask if we can replace weak mixing by one of them in Theorem~\ref{thm:WMnotMT}. In the view of the above results we would like to pose the following problem.

\begin{question}
Is there any nontrivial characterization of multi-transitive weakly mixing systems?% (e.g. disjointness or independence \cite{HYL})?
%How to characterize multi-minimal systems?
\end{question}

\section{Minimal self-joinings}\label{sec:5}

The last question in \cite{TKSM} asks: \emph{Can $f\times f^2 \times\ldots\times f^m\colon X^m\ra X^m$ be minimal if $m\ge 2$ and $X$ has at least two elements?} Let us call a map $f\colon X\ra X$ providing an affirmative answer to the above question \emph{multi-minimal}. Apparently, Moothathu posing his problem was not aware that the examples of multi-minimal homeomorphisms are known. But since their existence is stated in the language slightly different than terminology used in \cite{TKSM} we find it necessary to add some explanations. In fact the construction of multi-minimal systems is related to the considerations on multiple disjointness.

The first example of a system disjoint from any of its iterates (we are aware of) is the example of a POD (\emph{proximal orbit dense}) minimal homeomorphism given by Furstenberg, Keynes and Shapiro in \cite{FKS}. By Theorem 2.6 of \cite{Markley} every POD system has \emph{positive topological minimal self-joinings} (see \cite{Markley}). It also follows from Proposition 2.1 of \cite{Markley} that every homeomorphism possessing positive topological minimal self-joinings is multi-minimal, and so is the example from \cite{FKS}. Furthermore, del Junco's work \cite{delJunco}, together with his joint work with Rahe and Swanson \cite{dJRS} shows that Chacon's example \cite{Chacon} is POD, and hence also multi-minimal. In \cite{AM} Auslander and Markley introduced the class of \emph{graphic} minimal systems, which generalizes POD homeomorphisms. They also proved that each graphic flow is multi-minimal \cite[Corollary~22]{AM}. Moreover, as announced in \cite[page 490]{AM} Markley constructed an example of a graphic homeomorphisms which is not POD, hence it is another kind of multi-minimal homeomorphism.

More information about minimal subsystems of $f\times f^2 \times\ldots\times f^m$ is to be found in \cite{AM2,BGK,BK,delJunco2,King} to name only a few. There is also in some sense parallel and certainly deep theory of minimal self-joinings (a part of ergodic theory), introduced by Rudolph \cite{R}, see Glasner's book \cite{GlasnerBook}. We remark that although every weak mixing minimal map is multi-transitive it is not necessarily multi-minimal. The \emph{discrete horocycle flow} $h$ is an example of a weakly mixing minimal homeomorphism such that $h$ is topologically conjugated
to $h^2$, and hence it is not multi-minimal (see \cite[pages 26, and 105-110]{GlasnerBook}).
The facts gathered above prompt us to raise following questions:

\begin{question}
Is there any nontrivial characterization of multi-minimality in terms of some dynamical properties?% (e.g. disjointness or independence \cite{HYL})?
%How to characterize multi-minimal systems?
\end{question}

It is also interesting whether is it possible to characterize multi-minimal systems adding some mild assumptions to Theorem~\ref{thm:MMT}.
In particular, we don't know the answer for the following question.

\begin{question}
Assume that $f$ is a weakly mixing map such that $f\times f^2$ is minimal. Is $f$ necessarily multi-minimal?
\end{question}

\section{Acknowledgements}
We are grateful to the referee for a number of helpful suggestions for improvement in the article.
We would like to express many thanks to Professor Eli Glasner for his remarks on connections between disjointness and spectral type of minimal flows. They resulted in the survey exposition in Section~\ref{sec:5}.

This work was supported by the Polish Ministry of Science and Higher Education from sources for science in the years 2010-2011, grants  no. NN201270035 (D. Kwietniak) and IP2010~029570 (P. Oprocha).

A part of the present work was done during the visit of the first author at the University of Murcia. At the time the second autor was Marie Curie Fellow (European Community's Seventh Framework Programme FP7/2007-2013, grant no. 219212), supported additionally by MICINN and FEDER (grant MTM2008-03679/MTM).
The financial support and hospitality of these institutions is hereby gratefully acknowledged.


\begin{thebibliography}{99}
\bibitem{Auslander} J. Auslander, \emph{Minimal flows and their extensions}, North-Holland Mathematics Studies, 153, Amsterdam, 1988.
\bibitem{AM} J.~Auslander, N.~Markley, \emph{Graphic flows and multiple disjointness}, Trans. Amer. Math. Soc. \textbf{292} (1985), no. 2, 483--499.
\bibitem{AM2} J.~Auslander, N.~Markley, \emph{Isomorphism classes of products of powers for graphic flows} Ergodic Theory Dynam. Systems \textbf{17} (1997), no. 2, 297--305.
\bibitem{Spacing} J. Banks, T.T.D. Nguyen, P. Oprocha and B. Trotta, \emph{Dynamics of Spacing Shifts}, preprint, 2009.
\bibitem{BGK} F.~Blanchard, E.~Glasner, J.~Kwiatkowski,  \emph{Minimal self-joinings and positive topological entropy}, Monatsh. Math. \textbf{120} (1995), no. 3-4, 205--222.
\bibitem{BK} F.~Blanchard, J.~Kwiatkowski, \emph{Minimal self-joinings
and positive topological entropy. II}, Studia Math. \textbf{128}
(1998), no. 2, 121--133.
\bibitem{Chacon} R.~V.~Chacon, \emph{Weakly mixing transformations which are not strongly mixing}, Proc. Amer. Math. Soc. \textbf{22} 1969 559--562.
\bibitem{FurstenbergThm} H. Furstenberg, \emph{Disjointness in ergodic theory, minimal sets, and a problem in Diophantine approximation}, Math. Systems Theory, \textbf{1} (1967) 1--49.
%\bibitem{FurBook} H. Furstenberg, \emph{Recurrence in ergodic theory and combinatorial number theory}, M. B. Porter Lectures. Princeton University Press, Princeton, N.J., 1981.
\bibitem{FKS} H.~Furstenberg, H.~Keynes, L.~Shapiro, \emph{Prime flows in topological dynamics}, Israel J. Math. \textbf{14} (1973), 26--38.
\bibitem{GlasnerRec} E.~Glasner, \emph{Classifying dynamical systems by their recurrence properties}, Topol. Methods Nonlinear Anal. \textbf{24} (2004), no. 1, 21--40.
\bibitem{Glasner} E.~Glasner, \emph{Structure theory as a tool in topological dynamics}, London Math. Soc. Lecture Note
Series, 277, (2000), 173-209.
\bibitem{GlasnerBook} E.Glasner, \emph{Ergodic theory via joinings}, Mathematical Surveys and Monographs, 101. American Mathematical Society, Providence, RI, 2003.
\bibitem{GlasnerMaon} E. Glasner and D. Maon, \emph{Rigidity in topological dynamics}, Ergodic Theory Dynam. Systems, \textbf{9} (1989), 309--320.
\bibitem{HuangYe1} W. Huang and X. Ye, \emph{Topological complexity, return times and weak disjointness}, Ergodic Theory Dynam. Systems, \textbf{24} (2004), 825--846.
\bibitem{HuangYe2} W. Huang and X. Ye, \emph{An explicit scattering, non-weakly mixing example and weak disjointness}, Nonlinearity, \textbf{15} (2002), 849--862.
%\bibitem{HYL} W. Huang, H.~Li, X. Ye, \emph{Family-independence for topological and measurable dynamics}, preprint.
\bibitem{delJunco} A.~del Junco,  \emph{A family of counterexamples in ergodic theory}, Israel J. Math. \textbf{44} (1983), no. 2, 160--188.
\bibitem{dJRS} A.~del Junco, M.~Rahe, L.~Swanson, \emph{Chacon's automorphism has minimal self-joinings}, J. Analyse Math. \textbf{37} (1980), 276--284.
\bibitem{delJunco2} A. del Junco \emph{On minimal self-joinings in
topological dynamics}, Ergodic Theory Dynam. Systems \textbf{7}
(1987), no. 2, 211--227.
\bibitem{King} J.~King, \emph{A map with topological minimal self-joinings in the sense of del Junco}, Ergodic Theory Dynam. Systems \textbf{10} (1990), no. 4, 745--761.
\bibitem{KTrans} S. Kolyada and L. Snoha, \emph{Some aspects of topological transitivity---a survey}, Grazer Math. Ber., \textbf{334} (1997), 3--35.
\bibitem{KST} S. Kolyada, L. Snoha and S. Trofimchuk, \emph{Noninvertible minimal maps}, Fund. Math., \textbf{168} (2001), 141--163.
\bibitem{LZ} K. Lau and A. Zame, \emph{On weak mixing of cascades}, Math. Systems Theory, \textbf{6} (1972/73), 307--311.
\bibitem{Markley} N.~Markley,  \emph{Topological minimal self-joinings}, Ergodic Theory Dynam. Systems \textbf{3} (1983), no. 4, 579--599.
\bibitem{TKSM} T.K.S. Moothathu, \emph{Diagonal points having dense orbit}, Colloq. Math., \textbf{120} (2010), 127--138.
\bibitem{R} D.~Rudolph, \emph{An example of a measure preserving map with minimal self-joinings, and applications}, J. Analyse Math. \textbf{35} (1979), 97--122.
\end{thebibliography}
\end{document}